\documentclass[12pt]{article}

\usepackage{t1enc}
\usepackage[latin1]{inputenc}
\usepackage[english]{babel}
\usepackage{amsmath,amsthm}
\usepackage{amsfonts}
\usepackage{latexsym}
\usepackage{graphicx}
\usepackage{txfonts}
\usepackage[natural]{xcolor}

\newtheorem{thm}{Theorem}
\newtheorem{lem}[thm]{Lemma}

\newtheorem{conj}[thm]{Conjecture}

\setlength{\textwidth}{6.25in}
\setlength{\textheight}{9in}
\setlength{\voffset}{-1.5cm}
\setlength{\hoffset}{-2.1cm}

\makeatletter

\newcommand{\Rmnum}[1]{\expandafter\@slowromancap\romannumeral #1@}
\makeatother

\begin{document}

\title{Equitable vertex arboricity of planar graphs\thanks{Supported by the National Natural Science Foundation of China (No.\,11301410, 11201440, 11101243), the Natural Science Basic Research Plan in Shaanxi Province of China (No.\,2013JQ1002), the Specialized
Research Fund for the Doctoral Program of Higher Education (No.\,20130203120021), and the Fundamental Research Funds for the Central Universities (No.\,K5051370003, K5051370021).}}
\author{Xin Zhang\thanks{Email address: xzhang@xidian.edu.cn.}\\
{\small Department of Mathematics, Xidian University, Xi'an 710071, China}}
\date{}

\maketitle

\begin{abstract}\baselineskip  0.65cm
Let $G_1$ be a planar graph such that all cycles of length at most 4 are independent and let $G_2$ be a planar graph without 3-cycles and adjacent 4-cycles. It is proved that the set of vertices of $G_1$ and $G_2$ can be equitably partitioned into $t$ subsets for every $t\geq 3$ so that each subset induces a forest. These results partially confirm a conjecture of Wu, Zhang and Li \cite{WZL}.\\[.2em]
\textbf{Keywords:} equitable coloring; vertex arboricity; planar graph

\end{abstract}

\baselineskip  0.65cm

\section{Introduction}

All graphs considered in this paper are finite, simple and undirected. By $V(G)$, $E(G)$, $\delta(G)$ and $\Delta(G)$, we denote the set of vertices, the set of edges, the minimum degree and the maximum degree of a graph $G$, respectively. For a plane graph $G$, $F(G)$ denotes its set of faces. A $k$-, $k^+$- and $k^-$-$vertex$ (resp.\,\emph{face}) in $G$ is a vertex (resp.\,face) of degree $k$, at least $k$ and at most $k$, respectively.
By $N(u)$, we denote the set of neighbors of $v$.
We call $u$ the \emph{$k$-neighbor} or \emph{$k^+$-neighbor} of $v$ if $uv\in E(G)$ and $u$ is a $k$-vertex or a $k^+$-vertex, respectively.
Two cycles are \emph{independent} in $G$ if they share no common vertices in $G$.
For other undefined notations, we refer the readers to \cite{Bondy.2008}.

The \emph{vertex arboricity}, or \emph{point arboricity} $a(G)$ of a graph $G$ is the minimum number of subsets into which the set of vertices can be partitioned so that each subset induces a forest. This chromatic parameter of graphs was extensively studied since it was
first introduced by Chartrand and Kronk in \cite{CK}, where is proved that $a(G)\leq 3$
for every planar graph. 

As we know, there are many variations of vertex arboricity of graphs, such as linear vertex arboricity \cite{M}, fractional vertex arboricity \cite{YZ}, fractional linear vertex arboricity \cite{ZWL} and tree arboricity \cite{CCC}.
Naturally, we can also consider the equitable version of
vertex arboricity when we restrict the partition in its original definition to be an equitable one, that is, a partition so that the size of each subset is either $\lceil|G|/k\rceil$ or $\lfloor|G|/k\rfloor$. If the set of vertices of a graph $G$ can be equitably partitioned into $k$ subsets such that each subset of vertices induce a forest of $G$, then we call that $G$ admits an \emph{equitable $k$-tree-coloring}. The minimum integer $k$ such that $G$ has an equitable $k$-tree-coloring is the \emph{equitable vertex arboricity} $a_{eq}(G)$ of $G$. The notion of equitable vertex arboricity was first introduced by Wu, Zhang and Li \cite{WZL}. In their paper, the authors proved that the complete bipartite graph $K_{n,n}$ has an equitable $k$-tree-coloring for every $k\geq 2\lfloor({\sqrt{8n+9}}-1)/4\rfloor$ and showed that the bound is sharp when $2n=t(t+3)$ and $t$ is odd. Note that $K_{n,n}$ admits an equitable $2$-tree-coloring. Hence a graph admitting an equitable $k$-tree-coloring may has no equitable $(k+1)$-tree-colorings. This motivates us to introduce another chromatic parameter. The \emph{strong equitable vertex arboricity} of $G$, denoted by $a^*_{eq}(G)$, is the smallest $t$ such that $G$ has an equitable $t'$-tree-coloring for every $t'\geq t$. It is easy to see that $a^*_{eq}(G)\geq a_{eq}(G)$. Concerning $a^*_{eq}(G)$, there are two interesting conjectures.

\begin{conj}\label{conj1}
$a^*_{eq}(G)\leq \lceil\frac{\Delta(G)+1}{2}\rceil$ for every graph $G$.
\end{conj}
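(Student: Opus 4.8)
The statement is a conjecture, so what follows is an attack plan rather than a finished argument, and I will indicate exactly where the difficulty concentrates. Write $t_0 := \lceil(\Delta(G)+1)/2\rceil$ and $n := |V(G)|$. Since $a^*_{eq}(G)$ is the least $t$ for which an equitable $t'$-tree-coloring exists for \emph{every} $t'\ge t$, and since equitable tree-colorability is not monotone in the number of colors (the $K_{n,n}$ phenomenon recalled in the introduction shows colorability can be lost when a color is added), the plan is to produce, for each individual $t\ge t_0$, an equitable partition of $V(G)$ into $t$ parts each inducing a forest. Two regimes are immediate. When $t\ge\lceil n/2\rceil$, every equitable partition has all parts of size at most $2$ (as $\lceil n/t\rceil\le 2$), and a set of at most two vertices cannot contain a cycle, so \emph{any} balanced partition is already a tree-coloring. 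At the threshold $t=t_0$, a classical partition theorem of Lov\'asz guarantees a (non-equitable) partition of $V(G)$ into $t_0$ parts each inducing maximum degree at most $1$, hence into $t_0$ forests; this explains why $t_0$ is the right bound and shows the obstruction is entirely one of \emph{equitability} in the intermediate range $t_0\le t<\lceil n/2\rceil$, where parts of size at least $3$ can occur and monochromatic cycles must be avoided.

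For the intermediate range I would run a Hajnal--Szemer\'edi / Kierstead--Kostochka style augmentation. Fix such a $t$ and start from an arbitrary equitable partition $V_1,\dots,V_t$. Attach to it the potential equal to the number of edges lying on cycles inside the parts; if this is $0$ we are done. Otherwise choose a part $V_i$ containing a cycle $C$, so some $v\in C$ has at least two neighbors inside $V_i$. The goal is a balance-preserving swap: move $v$ to some $V_j$ and a vertex $w$ from $V_j$ into $V_i$, chosen so that $v$ creates no cycle in $V_j$ and $w$ creates no cycle in $V_i$, while the potential strictly drops. Because $v$ has at most $\Delta\le 2t_0-1\le 2t-1$ neighbors distributed over the other $t-1$ parts, an averaging count supplies many parts in which $v$ has at most one neighbor, i.e.\ candidate targets where no cycle \emph{through $v$} can form immediately; a symmetric count governs the choice of $w$. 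When no single swap works, I would form the auxiliary ``exchange digraph'' on the parts and search for an alternating trading chain that reroutes one unit of imbalance through a sequence of parts, repairing the broken cycle without recreating it, exactly as in the known algorithmic proofs of the equitable (and defective) Hajnal--Szemer\'edi theorems.

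The main obstacle is that acyclicity, unlike independence (a per-edge condition, as in proper equitable coloring) or bounded degree (still a local condition, as in equitable defective coloring), is a \emph{global} property: moving $v$ into $V_j$ can close a cycle through edges of $V_j$ far from $v$, and a single exchange can destroy one cycle while silently creating another, so monotone decrease of the potential is not automatic. Having only ``$v$ has at most one neighbor in $V_j$'' forbids a length-$2$ return but not longer cycles assembled from the pre-existing structure of $V_j$; controlling these, and proving that the trading chains terminate, is the crux. My proposed route through it is to maintain the stronger invariant ``every part induces maximum degree at most $1$'' near the threshold, where Lov\'asz's theorem certifies feasibility and degree bounds are local and trackable, and to relax to genuine acyclicity as the parts grow toward the size-$2$ regime, interpolating between the two invariants via the exchange argument. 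Making this interpolation rigorous uniformly over all intermediate $t$ is precisely the step that is not known in general, and it is exactly where the additional sparsity hypotheses of the planar results proved in this paper are currently indispensable.
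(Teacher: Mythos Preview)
The statement is Conjecture~\ref{conj1}, which the paper does \emph{not} prove; it is stated as open, with only special cases (complete bipartite graphs, certain planar classes, graphs with $\Delta(G)\ge |G|/2$) recorded as confirmed in the cited references. So there is no paper proof to compare your proposal against, and you are right to present an attack plan rather than a finished argument. Your framing of the easy regimes is correct: parts of size at most $2$ are automatically forests once $t\ge\lceil n/2\rceil$, and Lov\'asz's decomposition theorem does give a non-equitable partition into $t_0=\lceil(\Delta+1)/2\rceil$ classes each of maximum degree at most $1$, so the entire difficulty is equitability in the intermediate range, and a Hajnal--Szemer\'edi style exchange is the natural paradigm.

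There is, however, a concrete misstatement in your diagnosis of the obstacle. You write that ``$v$ has at most one neighbor in $V_j$'' forbids only a length-$2$ return but not longer cycles; in fact it forbids \emph{every} cycle through $v$, since a vertex of degree at most $1$ in $G[V_j\cup\{v\}]$ lies on no cycle of any length. Moving $v$ alone into such a $V_j$ is therefore always safe. The genuine difficulty is on the other side of the swap: finding the compensating $w\in V_j$ with at most one neighbor in $V_i\setminus\{v\}$. No averaging over $V_j$ supplies this, because you have no control on the number of $V_j$--$V_i$ edges. When no single swap exists and you pass to cascades, the situation is strictly harder than in the Kierstead--Kostochka argument for proper equitable coloring: there a vertex blocks a target class via a single edge, whereas here $w$ blocks $V_i$ only when it has \emph{two} neighbors there, so the auxiliary exchange structure encodes a pair condition rather than a graph, and the known termination arguments do not transfer. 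Your proposed interpolation via the max-degree-$1$ invariant is reasonable at $t=t_0$, but you give no mechanism for relaxing it as $t$ increases toward $\lceil n/2\rceil$; that missing mechanism is exactly the open content of the conjecture.
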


\begin{conj}\label{conj2}
There is a constant $\zeta$ such that $a^*_{eq}(G)\leq \zeta$ for every planar graph $G$.
\end{conj}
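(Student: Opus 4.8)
The plan is to establish the stronger statement that bounded \emph{degeneracy} alone forces bounded strong equitable vertex arboricity, and then to invoke the fact that every planar graph is $5$-degenerate (every subgraph has a vertex of degree at most $5$). Concretely, I would prove that there is a constant $\zeta$, depending only on the degeneracy bound $5$, such that every $5$-degenerate graph $G$ admits an equitable $t$-tree-coloring for every $t\ge\zeta$; since planar graphs are $5$-degenerate, this yields Conjecture~\ref{conj2}. Two regimes separate at once. If $t\ge|V(G)|$ then every color class has at most one vertex, so the coloring is trivially equitable and each class is a forest; hence only the range $\zeta\le t<|V(G)|$ requires work, and there $q:=\lfloor|V(G)|/t\rfloor\ge1$.

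For the main range I would color the vertices greedily along a degeneracy ordering $v_1,\dots,v_n$, in which each $v_i$ has at most $5$ neighbors among $v_1,\dots,v_{i-1}$. The crucial local fact is that, when $v_i$ is inserted, a color $c$ can fail to keep its class a forest only if $v_i$ already has at least two earlier neighbors colored $c$; since $v_i$ has at most $5$ earlier neighbors, at most $\lfloor5/2\rfloor=2$ colors are \emph{forbidden}, so at least $t-2$ colors remain available. Thus for $t\ge3$ a forest-preserving color always exists, which re-proves $a(G)\le3$ of \cite{CK} and, more importantly, leaves a great deal of freedom with which to enforce balance.

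To convert this freedom into equitability I would maintain as an invariant that the class sizes stay within $1$ of one another, always inserting $v_i$ into a smallest non-forbidden class. The difficulty is that the unique smallest class may be forbidden for $v_i$; then one must first recolor one of $v_i$'s two earlier neighbors lying in that class, moving it elsewhere without creating a cycle, and here the repeatedly used fact ``every subset of a forest is a forest'' keeps the donor class safe. I would organize these corrections as bounded augmenting sequences of recolorings and show that, once $t$ exceeds an absolute constant, such a sequence can always be completed. The residual $O(1)$ imbalance is then removed by viewing the final partition through an arboricity-$3$ decomposition $V(G)=U_1\cup U_2\cup U_3$ into forests and tiling each $U_i$ by parts of size $q$ or $q+1$; this number-theoretic tiling succeeds up to moving a bounded number of vertices between the three forests, each move being safe exactly because subsets of forests are forests.

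The step I expect to be the genuine obstacle is precisely this rebalancing. Unlike equitable \emph{proper} coloring, where a single vertex of large degree can block every recoloring and forces max-degree hypotheses, the forest relaxation supplies slack; but a high-degree vertex may still have two neighbors in the same tree-component of a target class, so a move that looks safe can nevertheless close a cycle. Controlling this requires either keeping the components of each class small throughout the process or routing each augmenting sequence around the heavy vertices, and it is exactly these considerations that pin down the size of the constant $\zeta$. Making the augmenting argument terminate while simultaneously respecting equitability and the acyclicity of every color class is the crux on which the whole proof turns.
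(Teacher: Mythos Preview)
The statement you are attempting is Conjecture~\ref{conj2}, which the paper leaves \emph{open}: there is no proof of it in the paper to compare against. The paper only confirms the conjecture for two restricted subclasses of planar graphs (Theorems~\ref{main} and~\ref{main-2}), each via a discharging argument tailored to the extra structural hypotheses; no argument for general planar graphs is offered.

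Your plan has a genuine gap at precisely the point you yourself flag as the crux. The greedy insertion along a degeneracy ordering correctly gives each new $v_i$ at most two forbidden colors, because only the at most five \emph{earlier} neighbors matter. But the rebalancing step must recolor already-placed vertices, and for such a vertex $w$ \emph{all} of its neighbors in $G$ are now relevant; a high-degree $w$ may have two neighbors in the same component of every target class, so no move is available and the augmenting sequence stalls. Nothing in the sketch explains how to guarantee termination, and this obstruction is exactly why the conjecture was open at the time of the paper. Your fallback---tiling an arboricity-$3$ decomposition $U_1\cup U_2\cup U_3$ into blocks of size $q$ or $q+1$ and repairing the arithmetic by moving $O(1)$ vertices between the $U_i$---is also unsound as stated: the slogan ``subsets of forests are forests'' protects only the class \emph{losing} a vertex, whereas the class \emph{gaining} the vertex is a superset of a forest and may well acquire a cycle; moreover a given $|U_i|$ need not be representable as $aq+b(q+1)$ with $a,b\ge0$ at all (for instance when $0<|U_i|<q$), so ``$O(1)$ moves'' is not justified either.
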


Until now, Conjecture \ref{conj1} was confirmed for complete bipartite graphs, planar graphs with girth at least 6, planar graphs with maximum degree at least 4 and girth 5, outerplanar graphs \cite{WZL} and graphs $G$ with $\Delta(G)\geq |G|/2$ \cite{ZW}, and Conjecture \ref{conj2} was settled for planar graphs with girth at least 5 and outerplanar graphs \cite{WZL}. In particular, Wu, Zhang and Li \cite{WZL} proved that $a^*_{eq}(G)\leq 3$ for every planar graph with girth at least 5.
In this paper, we will generalize this result to Theorems \ref{main} and \ref{main-2}, and confirm Conjecture \ref{conj2} for planar graphs with all cycles of length at most 4 being independent and planar graphs without 3-cycles and adjacent 4-cycles.

\section{Main Results and their proofs}

\begin{lem}{\rm(Wu, Zhang and Li \cite{WZL})}\label{lem:label}
Let $S=\{x_1,\cdots,x_t\}$, where $x_1,\cdots,x_t$ are distinct vertices in $G$. If $G-S$ has an equitable $t$-tree-coloring and $|N(x_i)\setminus S|\leq 2i-1$ for every $1\leq i\leq t$, then $G$ has an equitable $t$-tree-coloring.
\end{lem}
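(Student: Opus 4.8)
The plan is to fix an equitable $t$-tree-coloring $\phi$ of $G-S$ and extend it to all of $G$ by colouring the vertices $x_t, x_{t-1}, \dots, x_1$ in this (decreasing) order, using pairwise distinct colours from $\{1,\dots,t\}$ for the vertices of $S$. The reason to process in decreasing order of index is that when we reach $x_i$, exactly $t-i$ colours have been used (on $x_{i+1},\dots,x_t$), so $i$ colours remain available; moreover every already-coloured neighbour of $x_i$ that lies in $S$ is one of $x_{i+1},\dots,x_t$ and hence carries one of the $t-i$ forbidden colours. Consequently, among the already-coloured vertices, only the at most $2i-1$ neighbours of $x_i$ in $V(G)\setminus S$ can receive the colour we are about to assign to $x_i$.

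The next step is the counting. Recall that adding a vertex to a forest again yields a forest provided the new vertex has at most one neighbour in it. So it suffices to find an available colour $c$ such that $x_i$ has at most one neighbour coloured $c$ among the already-coloured vertices; by the previous paragraph it is enough that $x_i$ has at most one neighbour coloured $c$ in $V(G)\setminus S$. If no such available colour existed, then each of the $i$ available colours would appear on at least two neighbours of $x_i$ in $V(G)\setminus S$, forcing $|N(x_i)\setminus S|\ge 2i$, contrary to hypothesis. Hence a suitable colour $c$ exists; colour $x_i$ with it and continue down to $x_1$. The outcome is a $t$-tree-coloring of $G$ in which the $t$ vertices of $S$ receive $t$ distinct colours.

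It remains to verify equitability. Since $|G-S| = |G| - t$, each colour class of $\phi$ has size $\lceil |G|/t\rceil - 1$ or $\lfloor |G|/t\rfloor - 1$. Because the colours assigned to $x_1,\dots,x_t$ are all distinct, the extension adds exactly one vertex to each of the $t$ classes, so every class grows by one and ends up with $\lceil |G|/t\rceil$ or $\lfloor |G|/t\rfloor$ vertices; this is precisely an equitable $t$-tree-coloring of $G$. The only genuinely delicate point — and the reason the hypothesis uses the bound $2i-1$ rather than $2i$ — is the pigeonhole estimate in the second step; the rest is bookkeeping about which colours are available and about the sizes of the classes.
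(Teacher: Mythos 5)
Your proof is correct: the reverse-order greedy extension (colouring $x_t,\dots,x_1$ with pairwise distinct colours, choosing for $x_i$ among the $i$ remaining colours one that appears at most once on $N(x_i)\setminus S$ via the pigeonhole bound $|N(x_i)\setminus S|\leq 2i-1$, and noting each class grows by exactly one vertex) is precisely the standard argument for this lemma, which the paper itself only cites from Wu, Zhang and Li without reproving. No gaps to report.
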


\begin{lem}\label{mdg}
If $G$ is a planar graph such that all cycles of length at most $4$ are independent, then $\delta(G)\leq 3$.
\end{lem}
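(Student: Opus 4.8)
\textit{Proof proposal.}
The plan is to argue by contradiction: suppose $G$ is a planar graph in which all cycles of length at most $4$ are pairwise vertex-disjoint, yet $\delta(G)\ge 4$. Passing to a component I may assume $G$ is connected, and I fix a plane embedding, with face set $F(G)$. Since $G$ is simple and $\delta(G)\ge 4$, every facial walk of length at most $4$ is a cycle; thus a $3$-face is a triangle, a $4$-face is a $4$-cycle, and there are no $1$- or $2$-faces.

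Two structural observations come straight from the hypothesis. First, every vertex lies on at most one cycle of length $\le 4$, so in particular it is incident with at most one $3$-face. Second, and crucially: if $x$ has degree exactly $4$ and is incident with a $3$-face $xyz$, then each of the other three faces at $x$ is a $5^+$-face. Indeed, a $3$-face at another corner of $x$ would be a second triangle through $x$, and a $4$-face at another corner of $x$ would be a $4$-cycle through $x$; either contradicts vertex-disjointness, since $x$ already lies on the triangle $xyz$. Call a degree-$4$ vertex incident with a $3$-face a \emph{bad vertex}; so a bad vertex has exactly one corner lying in a $3$-face and three corners lying in $5^+$-faces.

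Now I run a discharging argument on the plane graph. Assign to each vertex $v$ the charge $d(v)-4$ and to each face $f$ the charge $d(f)-4$; by Euler's formula the total charge is $\sum_v(d(v)-4)+\sum_f(d(f)-4)=4|E|-4(|V|+|F|)=4|E|-4(|E|+2)=-8$. Redistribute charge by: (R1) each $3$-face takes $\tfrac13$ across each of its three corners (equivalently $\tfrac13$ from each of its three vertices); (R2) each $5^+$-face sends $\tfrac19$ across each of its corners that sits at a bad vertex. I then check final charges are non-negative. A $3$-face ends with $3-4+3\cdot\tfrac13=0$; a $4$-face is untouched, ending with $0$; a $k$-face with $k\ge5$ has at most $k$ corners, so ends with at least $(k-4)-\tfrac{k}{9}=\tfrac{8k-36}{9}\ge\tfrac49>0$. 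A vertex of degree $\ge5$ loses at most $\tfrac13$ (it meets at most one $3$-face) and receives nothing, so ends with at least $\tfrac23>0$; a degree-$4$ vertex not on a triangle is untouched; and a bad vertex loses exactly $\tfrac13$ by (R1) but regains $3\cdot\tfrac19=\tfrac13$ by (R2) from the $5^+$-faces at its three non-triangle corners, ending with $0$. Hence the total charge is non-negative, contradicting that it equals $-8$, and therefore $\delta(G)\le3$.

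The one genuinely delicate step is the claim that the three non-triangle faces at a bad vertex are $5^+$-faces; this is exactly where both halves of the hypothesis are used (no two triangles, and no triangle and $4$-cycle, meeting at a vertex), and it is what makes (R2) feasible. Everything else is bookkeeping. Phrasing (R1) and (R2) in terms of corners rather than incident vertices/faces is deliberate: it makes the accounting correct even when $G$ is not $2$-connected, where a single face may touch a bad vertex at two of its corners.
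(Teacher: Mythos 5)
Your proof is correct and follows essentially the same route as the paper: a discharging argument starting from the charges $d(x)-4$ on vertices and faces, whose total is $-8$ by Euler's formula, with the independence hypothesis used to show that no short face can crowd a triangle. The only difference is in the rules -- the paper discharges purely face-to-face (every $5^+$-face sends $\tfrac13$ to each adjacent $3$-face, noting that $3$-faces are adjacent only to $5^+$-faces and that a $5^+$-face meets at most $\lfloor d(f)/2\rfloor$ of the pairwise disjoint triangles), whereas you route charge through degree-$4$ vertices incident with triangles; both versions are sound.
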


\begin{proof}
Suppose, to the contrary,  that $\delta(G)\geq 4$. By Euler's formula, we have $\sum_{x\in V(G)\cup F(G)}\big(d(x)-4\big)=-8$. Assign every element $x\in V(G)\cup F(G)$ an initial charge $c(x)=d(x)-4$ and define a discharging rule as follows.

\vspace{2mm}Rule. Every $5^+$-face transfer $\frac{1}{3}$ to each of its adjacent $3$-faces.

\vspace{2mm}Let $c'$ be the final charge function after discharging according to the rule. Since every $3$-face is adjacent only to $5^+$-faces by the definition of $G$, $c'(f)=3-4+3\times\frac{1}{3}=0$ for $d(f)=3$. On the other hand, every $5^+$-face $f$ is adjacent to at most $\lfloor\frac{d(f)}{2}\rfloor$ 3-faces, which implies that $c'(f)\geq d(f)-4-\frac{1}{3}\lfloor\frac{d(f)}{2}\rfloor>0$ for $d(f)\geq 5$. Therefore, $\sum_{x\in V(G)\cup F(G)}c'(x)\geq 0$, contradicting the fact that $\sum_{x\in V(G)\cup F(G)}c'(x)=\sum_{x\in V(G)\cup F(G)}c(x)=-8$.
\end{proof}

\begin{thm}\label{main}
If $G$ is a planar graph such that all cycles of length at most 4 are independent, then $a^*_{eq}(G)\leq 3$.
\end{thm}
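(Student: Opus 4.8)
The strategy is induction on $|G|$ via the deletion-and-reinsertion technique encapsulated in Lemma \ref{lem:label} (with $t=3$). For the base case one handles all graphs with at most, say, three vertices directly: any such graph has an equitable $3$-tree-coloring since each colour class has at most one vertex. For the inductive step, suppose $|G|\geq 4$ and every planar graph with fewer vertices, all of whose cycles of length at most $4$ are independent, has an equitable $3$-tree-coloring. By Lemma \ref{mdg} we have $\delta(G)\leq 3$. The goal is to produce a set $S=\{x_1,x_2,x_3\}$ of three (distinct) vertices such that $G-S$ still lies in the class (this is automatic, since the property ``all short cycles are independent'' is closed under vertex deletion) and such that $|N(x_i)\setminus S|\leq 2i-1$ for $i=1,2,3$, i.e.\ $|N(x_1)\setminus S|\leq 1$, $|N(x_2)\setminus S|\leq 3$, $|N(x_3)\setminus S|\leq 5$. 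Then Lemma \ref{lem:label} applied to $G-S$ (which has an equitable $3$-tree-coloring by induction, as $|G-S|=|G|-3$ and $\lceil |G|/3\rceil,\lfloor|G|/3\rfloor$ shift compatibly) finishes the step.

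The heart of the matter is thus a \emph{structural lemma}: a planar graph $G$ with $\delta(G)\leq 3$ in which all cycles of length at most $4$ are independent contains a set $S$ of three vertices with the degree-profile above. I would take $x_1$ to be a vertex of minimum degree; if $\delta(G)\leq 1$ then $x_1$ has at most one neighbour outside any set, so the condition on $x_1$ is free, and one only needs $x_2,x_3$ with modest conditions, which is easy. The interesting case is $\delta(G)=2$ or $3$. Here the plan is to look for a short configuration: for instance, if $G$ has two $3^-$-vertices $u,v$ that are adjacent or at distance two, put them among $x_1,x_2,x_3$ so that deleting the earlier ones reduces the outside-degree of the later ones. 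More systematically, one runs a discharging argument (parallel in spirit to the one in Lemma \ref{mdg}, now without the assumption $\delta\geq 4$, so $2$- and $3$-vertices receive charge) to show that $G$ must contain one of a short list of reducible configurations — typically: a $2$-vertex with a $3^-$-neighbour; a $2$-vertex two of whose ``second neighbours'' are $3^-$-vertices; a $3$-vertex adjacent to two $3^-$-vertices; or a path/star on few vertices all of small degree — any of which yields the required ordered triple $S$. Independence of short cycles is exactly what forces enough $2$- and $3$-vertices to cluster (or forces the existence of a suitable sparse neighbourhood), because it severely limits how triangles and $4$-cycles can pack around a low-degree vertex.

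The main obstacle I anticipate is the structural/discharging lemma itself: one must choose the discharging rules so that, under the hypothesis that none of the reducible configurations occurs, every vertex and face ends with nonnegative charge, contradicting the total $-8$ (or $-12$ with the $d(x)-4$ weighting, depending on normalization). Getting the configuration list short enough that each entry visibly gives an admissible $S=(x_1,x_2,x_3)$ with $|N(x_i)\setminus S|\le 2i-1$, while long enough that discharging goes through, is the delicate balance; in particular one must be careful about $2$-vertices whose two neighbours are high-degree and far apart, since then a single $2$-vertex is not by itself enough and one needs a second low-degree vertex nearby, which is precisely where the ``independent short cycles'' hypothesis must be invoked. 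A secondary, purely bookkeeping, point is checking that equitability is preserved: deleting three vertices changes each target class size $\lceil|G|/3\rceil$ or $\lfloor|G|/3\rfloor$ by exactly one, so an equitable $3$-tree-coloring of $G-S$ is the right object to extend, and Lemma \ref{lem:label} is stated precisely to do this; I would just remark on this compatibility rather than belabour it.
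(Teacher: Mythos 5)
There is a genuine gap, and it sits exactly at the step you call the ``heart of the matter.'' Your plan works only if every graph in the class contains a triple $S=\{x_1,x_2,x_3\}$ with $|N(x_1)\setminus S|\le 1$, $|N(x_2)\setminus S|\le 3$, $|N(x_3)\setminus S|\le 5$, and you propose to get this from a discharging argument over a short list of Lemma-\ref{lem:label}-reducible configurations. But the profile $(1,3,5)$ is quite restrictive: for instance, an $8$-vertex $u$ whose neighbours include five $2$-vertices, all of whose second neighbours have large degree, admits no such triple locally ($u$ itself cannot play the role of $x_3$, since $8-2=6>5$), and nothing in the class hypothesis forbids this configuration. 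This is not a hypothetical worry: the paper's own proof establishes Propositions~6 and~7 (an $8$- or $9$-vertex has at most four $2$-neighbours, a $10$-vertex at most seven) by \emph{direct recolouring arguments on a minimal counterexample to the colouring statement}, precisely because these configurations are not reducible via Lemma~\ref{lem:label} when $t=3$; and its discharging then relies on those bounds (without Proposition~6, an $8$-vertex with eight $2$-neighbours has charge $14$ but must send $16$). In your setting --- a hypothetical graph with \emph{no} admissible triple --- you only get the analogues of Propositions~1, 2 and 5, which is strictly weaker structural information, and there is no reason to believe (and you give none) that a discharging argument closes with only that. So either your structural lemma is false, or proving it requires new ideas going well beyond ``a discharging parallel in spirit to Lemma~\ref{mdg}''; in either case the central ingredient of your proof is missing. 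The paper avoids this entirely by working with a minimal counterexample to the \emph{colouring} statement rather than to a purely structural statement, so that configurations immune to Lemma~\ref{lem:label} can still be killed by ad hoc recolourings.

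A second, smaller but real, defect: the theorem asserts $a^*_{eq}(G)\le 3$, i.e.\ an equitable $t$-tree-colouring for \emph{every} $t\ge 3$, whereas your induction is set up only for $t=3$ and would at best yield $a_{eq}(G)\le 3$. Since an equitable $k$-tree-colouring does not in general yield an equitable $(k+1)$-tree-colouring (the $K_{n,n}$ example in the introduction), you would have to run the argument for each $t\ge 3$, taking $|S|=t$ and filling the extra positions with vertices of degree at most $3$ (which exist by Lemma~\ref{mdg} applied to the successive subgraphs), exactly as the paper does in its Proposition~1; this part is routine, but it must be said, and note that for $t\ge 4$ the troublesome configurations above \emph{do} become reducible, which again shows that $t=3$ is the genuinely hard case your sketch does not resolve.
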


\begin{proof}
Let $G$ be the minimal counterexample to this result and let $t\geq 3$ be an integer. To begin with, we introduce some useful structural properties of $G$.

\vspace{2mm}\noindent Proposition 1. \emph{Every $2$-vertex in $G$ is adjacent only to $7^+$-vertices.}

\proof
If there is a 2-vertex $u$ that is adjacent to a $6^-$-vertex $v$, then label $u$ and $v$ by $x_1$ and $x_t$, respectively. We now construct the set $S=\{x_1,\ldots,x_t\}$ as in Lemma \ref{lem:label} by filling the remaining unspecified positions in $S$ from highest to lowest indices properly. Actually one can easily complete it by choosing at each step a vertex of degree at most 3 in the graph obtained from $G$ by deleting the vertices already chosen for $S$. Lemma \ref{mdg} guarantees that such vertices always exist. By the minimality of $G$, $G-S$ has an equitable $t$-tree-coloring for every $t\geq 3$. Therefore, $G$ also has such a desired coloring by Lemma \ref{lem:label}. \hfill $\square$

\vspace{2mm}\noindent Proposition 2. \emph{Every $3$-vertex in $G$ is either adjacent to three $5^+$-vertices or adjacent to one $4^-$-vertex and two $7^+$-vertices.}

\proof
If there is a 3-vertex $u$ that is adjacent to a $4^-$-vertex $v$ and a $6^-$-vertex $w$, then label $u,v$ and $w$ by $x_1,x_{t-1}$ and $x_t$, respectively. By similar argument as in the proof of Proposition 1, we can construct the set $S=\{x_1,\ldots,x_t\}$ as in Lemma \ref{lem:label} and then deduce that $G$ has an equitable $t$-tree-coloring for every $t\geq 3$, a contradiction. \hfill $\square$

\vspace{2mm} Similarly, we have the following:

\vspace{2mm}\noindent Proposition 3. \emph{If there is a $3$-face $f$ that is incident with a $3$-vertex, then $f$ is either incident with two $6^+$-vertices or incident with another one $5^-$-vertex and a $8^+$-vertex.}\hfill $\square$

\vspace{2mm}\noindent Proposition 4. \emph{If there is a $4$-face $f$ that is incident with a 3-vertex, then $f$ is either incident with three $4^+$-vertices, or incident with two $5^+$-vertex, or incident with a $4$-vertex and a $7^+$-vertex.}

\proof
Let $f=u_1u_2u_3u_4$ and $d(u_1)=3$. If $f$ is not incident with three $4^+$-vertices, then there is at least one $3^-$-vertex among $u_2$, $u_3$ and $u_4$. If $\min\{d(u_2),d(u_3),d(u_4)\}=2$, then by Proposition 1, $d(u_3)=2$ and $\min\{d(u_2),d(u_4)\}\geq 7$. If $d(u_2)=3$ or $d(u_4)=3$, then by Proposition 2, $\min\{d(u_3),d(u_4)\}\geq 7$ or $\min\{d(u_2),d(u_3)\}\geq 7$, respectively. If $d(u_3)=3$, then by Proposition 2, either $\min\{d(u_2),d(u_4)\}\geq 5$ or $\min\{d(u_2),d(u_4)\}=4$ and $\min\{d(u_2),d(u_4)\}\geq 7$.
\hfill $\square$

\vspace{2mm}\noindent Proposition 5. \emph{Every $7$-vertex is adjacent to at most one $2$-vertex.}

\proof
If there is a 7-vertex $u$ that is adjacent to two 2-vertices $v$ and $w$, then label $v,w$ and $u$ by $x_1,x_{t-1}$ and $x_t$, respectively. By the similar arguments asin the proof of Proposition 1, we can
construct the set $S=\{x_1,\ldots,x_t\}$ as in Lemma \ref{lem:label}. Therefore, $G-S$ has an equitable $t$-tree-coloring by the minimality of $G$, which implies that $G$ also has such a desired coloring for every $t\geq 3$ by Lemma \ref{lem:label}. \hfill $\square$

\vspace{2mm}\noindent Proposition 6. \emph{Every $8$-vertex and every $9$-vertex is adjacent to at most four $2$-vertices.}

\proof
Let $u$ be a $k$-vertex with $8\leq k\leq 9$ and let $v_1,\ldots,v_k$ be its neighbors in $G$. Without loss of generality, assume that $v_1,v_2,v_3,v_4$ and $v_5$ are 2-vertices. Let $w_i$ be the other neighbor of $v_i$ for every $1\leq i\leq 5$.

If $t\geq 4$, then label $v_1,v_2,v_3$ and $u$ with $x_1,x_{t-2},x_{t-1}$ and $x_t$, respectively, and construct the set $S=\{x_1,\ldots,x_t\}$ as in Lemma \ref{lem:label} by the similar arguments as in the proof of Proposition 1. Therefore, $G-S$ has an equitable $t$-tree-coloring by the minimality of $G$, which implies that $G$ also has such a desired coloring for every $t\geq 4$ by Lemma \ref{lem:label}.

We now prove that $G$ has an equitable $3$-tree-coloring.
By the minimality of $G$, the graph $H=G-\{u,v_1,v_2,v_3,v_4,v_5\}$ has an equitable $3$-tree-coloring $\varphi$.
If there is one color, say $3$, that does not appear on $N(u)\setminus \{v_1,v_2,v_3,v_4,v_5\}$, then color $u$ and $v_1$ with 3, $v_2$ and $v_3$ with 1, and $v_4$ and $v_5$ with 2. One can check that the resulted coloring of $G$ is just an equitable $3$-tree-coloring.

We now assume that all of the three colors appear on $N(u)\setminus \{v_1,v_2,v_3,v_4,v_5\}$. If $d(u)=8$, then we assume that $\varphi(v_6)=1$, $\varphi(v_7)=2$ and $\varphi(v_8)=3$. If $d(u)=9$, then we assume, without loss of generality, that $\varphi(v_6)=1$, $\varphi(v_7)=2$ and $\varphi(v_8)=\varphi(v_9)=3$. The following argument is independent of the degree of $u$.
First, we color $u$ with 1. If the color on one of the vertices among $w_1,w_2,w_3,w_4$ and $w_5$, say $w_1$, is not 1, then color $v_1$ with 1, $v_2$ and $v_3$ with 2, and $v_4$ and $v_5$ with 3. If $\varphi(w_i)=1$ for every $1\leq i\leq 5$, then recolor $u$ with 2, and color $v_1$ with 2, $v_2$ and $v_3$ with 1, and $v_4$ and $v_5$ with 3. In each case, one can easily check that the resulted coloring is an equitable $3$-tree-coloring of $G$.
\hfill $\square$

\vspace{2mm}\noindent Proposition 7. \emph{Every $10$-vertex is adjacent to at most seven $2$-vertices.}

\proof
Let $u$ be a $10$-vertex and let $v_1,\ldots,v_{10}$ be its neighbors in $G$. Without loss of generality, assume that $v_1,\ldots,v_7$ and $v_8$ are 2-vertices. Let $w_i$ be the other neighbor of $v_i$ for every $1\leq i\leq 8$.
By the same argument as in the proof of Proposition 6, one can confirm that $G$ has an equitable $t$-tree-coloring for every $t\geq 4$. Thus we just need prove that $G$ admits an equitable $3$-tree-coloring.

Let $H=G-\{u,v_1,\ldots,v_8\}$. By the minimality of $G$, $H$ has an equitable 3-tree-coloring $\varphi$. Suppose that the color $3$ does not appear on $v_9$ or $v_{10}$. If there is a vertex among $w_1,\ldots,w_{8}$, say $w_1$, that is not colored by 3, then we can extend $\varphi$ to an equitable 3-tree-coloring of $G$ by coloring $u,v_1,v_2$ with 3, $v_3,v_4,v_5$ with 1, and $v_6,v_7,v_8$ with 2. If $\varphi(w_i)=3$ for every $1\leq i\leq 8$, then color $u$ with a color, say 1, that appears on $v_9$ and $v_{10}$ at most once, color $v_1$ and $v_2$ with 1, $v_3,v_4,v_5$ with 2, and $v_6,v_7,v_8$ with 3. One can easily check that the resulted coloring is an equitable $3$-tree-coloring of $G$.\hfill $\square$

\vspace{2mm} We now prove the theorem by discharging. First, assign each vertex $v$ of $G$ an initial charge $c(v)=3d(v)-10$ and each face $f$ of $G$ an initial charge $c(v)=2d(f)-10$. By Euler's formula, $\sum_{x\in V(G)\cup F(G)}c(x)=-20$. It is easy to see that there is no 1-vertices in $G$. The discharging rules we are applying are defined as follows.

\vspace{2mm} R1. Every 2-vertex receives 2 from each of its neighbors.

R2. If $u$ be a 3-vertex and $uv\in E(G)$, then $v$ sends to $u$ a charge of $\frac{1}{3}$ if $5\leq d(v)\leq 6$ and $\frac{1}{2}$ if $d(v)\geq 7$.

R3. Let $f$ be a 3-face that is incident with no 2-vertices and let $v$ be a vertex that is incident with $f$. If $4\leq d(v)\leq 7$, then $v$ sends 2 to $f$, and if $d(v)\geq 8$, then $v$ sends 4 to $f$.

R4. If $f$ is a 3-face that is incident with a 2-vertex, then $f$ receives $2$ from each of its incident $7^+$-vertices.

R5. Every 4-face receives $1$ from each of its incident $4^+$-vertices.

\vspace{2mm} Let $c'$ be the final charge after discharging. We now prove that $c'(x)\geq 0$ for every $x\in V(G)\cup F(G)$, which contradicts the fact that $\sum_{x\in V(G)\cup F(G)}c'(x)=\sum_{x\in V(G)\cup F(G)}c(x)=-20$.

If $f$ is a 3-face that is incident with a 2-vertex, then by Proposition 1, $f$ is incident with two $7^+$-vertices, which implies that $c'(v)=-4+2\times 2=0$ by R4. Suppose that $f$ is a 3-face that is incident with no 2-vertices.
If $f$ is incident with at least a $8^+$-vertex, then $c'(f)\geq -4+4=0$ by R3. If $f$ is incident only with $7^-$-vertices, then by Propositions 3, $f$ is incident with at least two $4^+$-vertices, which implies that $c'(f)\geq -4+2\times 2=0$ by R3. If $f$ is a 4-face, then by Propositions 1 and 2, $f$ is incident with at least two $4^+$-vertices, thus by R5 we have $c'(f)\geq -2+2\times 1=0$. If $f$ is a $5^+$-face, then it is easy to see that $c'(f)=c(f)\geq 0$.

If $v$ is a 2-vertex, then by Proposition 1, $v$ is adjacent to two $7^+$-vertices form which $v$ receives $2\times 2=4$ by R1, therefore $c'(v)=-4+4=0$. If $v$ is a 3-vertex, then by Proposition 2, $v$ is either
adjacent to three $5^+$-vertices which implies $c'(v)\geq -1+3\times\frac{1}{3}=0$ or adjacent to two $7^+$-vertices implying $c'(v)\geq -1+2\times\frac{1}{2}=0$ by R2. Note that every vertex in $G$ is incident with at most one $4^-$-face by the definition of $G$. If $v$ is a 4-vertex, then $c'(v)\geq 2-2=0$ by R3 and R5.
If $v$ is a 5-vertex or a 6-vertex, then by R2, R3 and R5, $c'(v)\geq 3d(v)-10-\frac{1}{3}d(v)-2>0$. If $v$ is a $7$-vertex, then $v$ is adjacent to at most one 2-vertex by Proposition 5, thus $c'(v)\geq 11-2-6\times\frac{1}{2}-2>0$ by R1--R5. If $v$ is a 8-vertex or a 9-vertex, then by Proposition 6 and R1--R5, $c'(v)\geq 3d(v)-10-4\times 2-(d(v)-4)\times \frac{1}{2}-4=\frac{1}{2}\big(5d(v)-40\big)\geq 0$.
If $v$ is a 10-vertex, then by Proposition 7 and R1--R5, $c'(v)\geq 20-7\times 2-3\times \frac{1}{2}-4>0$.

At last, we consider the vertex $v$ with $d(v)\geq 11$. If $v$ is adjacent only to 2-vertices, then $v$ is incident with no 3-faces because otherwise there would be two adjacent 2-vertices in $G$, a contradiction. Therefore, by
R1 and R5, we have $c'(v)\geq 3d(v)-10-2d(v)-1\geq 0$. If $v$ is adjacent to at most $d(v)-2$ vertices of degree 2, then by R1--R5, $c'(v)\geq 3d(v)-10-2\big(d(v)-2\big)-2\times\frac{1}{2}-4=d(v)-11\geq 0$.
Suppose that $v$ is adjacent to $d(v)-1$ vertices of degree 2. If $v$ is incident with no $4^-$-faces, then $c'(v)\geq 3d(v)-10-2\big(d(v)-1\big)-\frac{1}{2}=d(v)-\frac{17}{2}>0$ by R1 and R2. If $v$ is incident with a $4^-$-face $f$, then either $f$ is a 4-face or a 3-face that is incident with a 2-vertex. In the former case we have $c'(v)\geq 3d(v)-10-2\big(d(v)-1\big)-\frac{1}{2}-1=d(v)-\frac{19}{2}>0$ by R1, R2 and R5, and in the latter case we have $c'(v)\geq 3d(v)-10-2\big(d(v)-1\big)-\frac{1}{2}-2=d(v)-\frac{21}{2}>0$ by R1, R2 and R4.
\end{proof}

\begin{thm}\label{main-2}
If $G$ is a planar graph with girth at least 4 such that no two 4-cycles are adjacent, then $a^*_{eq}(G)\leq 3$.
\end{thm}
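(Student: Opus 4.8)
The plan is to mimic the proof of Theorem \ref{main} step by step: take a minimal counterexample $G$, fix an integer $t\geq 3$, build up a list of forbidden (reducible) configurations via Lemma \ref{lem:label}, and then contradict Euler's formula by discharging. The role played by Lemma \ref{mdg} is now played by the trivial observation that a planar graph of girth at least $4$ satisfies $|E(G)|\leq 2|V(G)|-4$ and hence $\delta(G)\leq 3$; since this inequality is inherited by every subgraph, a $3^-$-vertex is always available when one fills the vacant slots of the set $S$ in Lemma \ref{lem:label} greedily, exactly as in the proof of Proposition 1 of Theorem \ref{main}.

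First I would establish the girth-$4$ analogues of Propositions 1--7. The reducibility statements that come straight from Lemma \ref{lem:label} carry over verbatim (if anything the absence of $3$-faces only helps): every $2$-vertex has only $7^+$-neighbours; every $3$-vertex is adjacent to three $5^+$-vertices or to one $4^-$-vertex and two $7^+$-vertices; every $4$-face incident with a $3$-vertex is incident with at least two $4^+$-vertices; and a $7$-vertex, an $8$- or $9$-vertex, and a $10$-vertex are adjacent to at most one, four, and seven $2$-vertices, respectively. The last three statements are proved, as in Propositions 6 and 7, by deleting a large vertex together with a few of its $2$-neighbours, taking an equitable $3$-tree-colouring of the remainder by minimality, and extending it; the hypotheses on the number of $2$-neighbours are precisely what prevents the reinstated large vertex from closing a monochromatic cycle in its colour class.

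For the discharging I would keep the charges $c(v)=3d(v)-10$ and $c(f)=2d(f)-10$, so that $\sum_{x\in V(G)\cup F(G)}c(x)=-20$, together with the rules ``each $2$-vertex pulls $2$ from every neighbour'', ``each $5$- or $6$-vertex (resp.\ $7^+$-vertex) pushes $\frac{1}{3}$ (resp.\ $\frac{1}{2}$) to every adjacent $3$-vertex'', and ``each $4$-face pulls $1$ from every incident $4^+$-vertex''. Because $G$ has girth at least $4$ there are no $3$-faces, every $5^+$-face keeps non-negative charge, and a $4$-face ends with charge $\geq 0$ by the structural propositions together with the fact that $G$ has no two adjacent $2$-vertices; the crucial new input on the vertex side is that, since no two $4$-cycles are adjacent, every vertex $v$ is incident with at most $\lfloor d(v)/2\rfloor$ $4$-faces, which caps the total it pays out. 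One then verifies $c'(v)\geq 0$ degree by degree just as in the proof of Theorem \ref{main}.

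The step I expect to be the real obstacle is the balance sheet for the largest vertices: under the naive rules above a $10$-vertex that attains the maximal number of $2$-neighbours while also being incident with close to $\lfloor d(v)/2\rfloor$ $4$-faces is essentially tight (indeed slightly over), so closing this case cleanly will require either a sharper reducible configuration --- a better bound on the number of $4$-faces around such a vertex, or on its number of $2$-neighbours --- or a mild re-tuning of the discharging constants. Once that case is disposed of, the remaining vertices and faces follow the pattern of Theorem \ref{main} and the argument closes.
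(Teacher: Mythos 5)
There is a genuine gap, and you have put your finger on it yourself without closing it. Your plan carries over Propositions 1--7 (which is fine, and is exactly what the paper does), but the discharging you keep from Theorem~\ref{main} cannot be rescued by ``a mild re-tuning of the constants'' or by one extra fact about the 10-vertex. Since here a vertex may lie on up to $\lfloor d(v)/2\rfloor$ 4-faces, under your rules a $d$-vertex all (or almost all) of whose neighbours are 2-vertices pays about $2d(v)+\lfloor d(v)/2\rfloor$ against a budget of $3d(v)-10$; this is negative for every $d$ with $10\leq d\leq 18$ (e.g.\ an 11-vertex with eleven 2-neighbours and five incident 4-faces has $23$ but pays $22+5=27$), and Propositions 1--7 bound the number of 2-neighbours only up to degree 10. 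So an entire range of degrees is uncontrolled, which is precisely the hard part of the theorem.

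What is missing is twofold. First, new reducible configurations: the paper proves that an 11-vertex has at most seven 2-neighbours, a 12- or 13-vertex at most ten, and a 14- or 15-vertex at most thirteen (its Propositions 8--10); these are not routine repetitions of Propositions 6--7 but require fresh extension arguments using equitable 3-, 4- and even 5-tree-colorings of $G$ minus the big vertex and a batch of its 2-neighbours, choosing a colour scarce both on the remaining neighbours of $u$ and on enough of the second neighbours $w_i$. Second, a genuinely different discharging: the paper uses $c(v)=d(v)-4$, $c(f)=d(f)-4$ and lets $5^+$-faces subsidize small vertices (each 2-vertex gets $\frac12$ from an incident $5^+$-face, each 3-vertex $\frac14$ from each of at least two such faces), which is legitimate exactly because the no-adjacent-4-cycles hypothesis forces every 2-vertex onto a $5^+$-face and every 3-vertex onto at least two of them. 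With faces sharing the load, a 2-vertex needs only $\frac34$ from each neighbour, so $16^+$-vertices close unconditionally and the new Propositions 8--10 handle degrees 11 through 15. Your proposal identifies the obstacle but supplies neither of these ingredients, so as it stands the proof does not go through.
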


\begin{proof}
Let $G$ be the minimal counterexample to this result and let $t\geq 3$ be an integer. Since every planar graph with girth at least 4 contains a $3^-$-vertex, Propositions 1--7 still hold here. Therefore, the order of the following propositions we are to prove are naturally labeled from 8.

\vspace{2mm}\noindent Proposition 8. \emph{Every $11$-vertex is adjacent to at most seven $2$-vertices.}

\proof

Let $u$ be a $11$-vertex and let $v_1,\ldots,v_{11}$ be its neighbors in $G$. Without loss of generality, assume that $v_1,\ldots,v_7$ and $v_8$ are 2-vertices. Let $w_i$ be the other neighbor of $v_i$ for every $1\leq i\leq 8$.

If $t\geq 5$, then label $v_1,v_2,v_3,v_4$ and $u$ with $x_1,x_{t-3},x_{t-2},x_{t-1}$ and $x_t$, respectively, and construct the set $S=\{x_1,\ldots,x_t\}$ as in Lemma \ref{lem:label} by the similar arguments as in the proof of Proposition 1.
Therefore, $G-S$ has an equitable $t$-tree-coloring by the minimality of $G$, which implies that $G$ also has such a desired coloring for every $t\geq 5$ by Lemma \ref{lem:label}.

We now prove that $G$ has an equitable 4-tree-coloring. Let $H_1=G-\{u,v_1,\ldots,v_7\}$. By the minimality of $G$, $H_1$ has an equitable 4-tree-coloring $\varphi_1$. It is easy to see that there are at least two colors, say 1 and 2, that are used at most once on $v_8,v_9,v_{10}$ and $v_{11}$. Color $u$ with 1. If there is one vertex among $w_1,\ldots,w_7$, say $w_1$, that is not colored with 1 under $\varphi_1$, then color $v_1$ with 1, $v_2,v_3$ with 2, $v_4,v_5$ with 3, and $v_6,v_7$ with 4. If $\varphi_1(w_i)=1$ for every $1\leq i\leq 7$, then recolor $u$ with 2, color $v_1$ with 2, $v_2,v_3$ with 1, $v_4,v_5$ with 3, and $v_6,v_7$ with 4. In each case we obtain an equitable 4-tree-coloring of $G$.

At last, we show that $G$ also admits an equitable 3-tree-coloring. By the minimality of $G$, $H_2=G-\{u,v_1,\ldots,v_8\}$ has an equitable 3-tree-coloring $\varphi_2$. Without loss of generality, let 1 and 2 be the colors used at most once on $v_9,v_{10}$ and $v_{11}$. Color $u$ with 1. If there are two vertices among $w_1,\ldots,w_8$, say $w_1$ and $w_2$, that are not colored with 1 under $\varphi_2$, then color $v_1,v_2$ with 1, $v_3,v_4,v_5$ with 2, and $v_6,v_7,v_8$ with 3. On the other hand, we can assume, without loss of generality, that $\varphi_2(w_i)=1$ for every $1\leq i\leq 7$. We now recolor $u$ with 2, color $v_1,v_2$ with 2, $v_3,v_4,v_5$ with 1, and $v_6,v_7,v_8$ with 3. In each case, one can check that the resulted coloring is an equitable 3-tree-coloring of $G$. \hfill $\square$

\vspace{2mm}\noindent Proposition 9. \emph{Every $12$-vertex and every $13$-vertex is adjacent to at most ten $2$-vertices.}

\proof
Let $u$ be a $k$-vertex with $12\leq k\leq 13$ and let $v_1,\ldots,v_k$ be its neighbors in $G$. Without loss of generality, assume that $v_1,\ldots,v_{10}$ and $v_{11}$ are 2-vertices. Let $w_i$ be the other neighbor of $v_i$ for every $1\leq i\leq 11$.

By the same argument as in the proof of the above proposition, one can show that $G$ has an equitable $t$-tree-coloring for every $t\geq 5$. Let $H=G-\{u,v_1,\ldots,v_{11}\}$. By the minimality of $G$, $H$ has an equitable 4-tree-coloring $\varphi_1$ and an equitable 3-tree-coloring $\varphi_2$. It is easy to see that there is a color, say 1, that has not used on $\{w_1\}\cup N(u)\setminus \{v_{1},\ldots,v_{11}\}$ under $\varphi_1$. Hence we can extend $\varphi_1$ to an equitable 4-tree-coloring of $G$ by coloring $u,v_1,v_2$ with 1, $v_3,v_4,v_5$ with 2, $v_6,v_7,v_8$ with 3, and $v_9,v_{10},v_{11}$ with 4. On the other hand, there exists a color, say 1, that is used on $N(u)\setminus \{v_{1},\ldots,v_{11}\}$ at most once, and with which three vertices among $w_1,\ldots,w_{11}$, say $w_1,w_2$ and $w_3$, are not colored under $\varphi_2$. Therefore, $\varphi_2$ can be extended to an equitable 3-tree-coloring of $G$ by coloring $u,v_1,v_2,v_3$ with 1, $v_4,v_5,v_6,v_7$ with 2, and $v_8,v_9,v_{10},v_{11}$ with 3. Hence, $G$ admits an equitable $t$-tree-coloring for every $t\geq 3$, a contradiction. \hfill $\square$

\vspace{2mm}\noindent Proposition 10. \emph{Every $14$-vertex and every $15$-vertex is adjacent to at most thirteen $2$-vertices.}

\proof
Let $u$ be a $k$-vertex with $14\leq k\leq 15$ and let $v_1,\ldots,v_k$ be its neighbors in $G$. Without loss of generality, assume that $v_1,\ldots,v_{13}$ and $v_{14}$ are 2-vertices. Let $w_i$ be the other neighbor of $v_i$ for every $1\leq i\leq 14$.

If $t\geq 6$, then label $v_1,v_2,v_3,v_4,v_5$ and $u$ with $x_1,x_{t-4},x_{t-3},x_{t-2},x_{t-1}$ and $x_t$, respectively, and construct the set $S=\{x_1,\ldots,x_t\}$ as in Lemma \ref{lem:label} by the similar arguments as in the proof of Proposition 1. Therefore, $G-S$ has an equitable $t$-tree-coloring by the minimality of $G$, which implies that $G$ also has such a desired coloring for every $t\geq 6$ by Lemma \ref{lem:label}.

Let $H=G-\{u,v_{1},\ldots,v_{14}\}$. One can see that $H$ has an equitable 5-tree coloring $\varphi_1$ and an equitable 3-tree coloring $\varphi_2$ by the minimality of $G$.
Without loss of generality, let 1 be the color that is not used on $\{w_1\}\cup N(u)\setminus \{v_{1},\ldots,v_{14}\}$ under $\varphi_1$. We extend $\varphi_1$ to an equitable 5-tree-coloring of $G$ by coloring $u,v_1,v_2$ with 1, $v_3,v_4,v_5$ with 2, $v_6,v_7,v_8$ with 3, $v_9,v_{10},v_{11}$ with 4, and $v_{12},v_{13},v_{14}$ with 5. On the other hand,
since there is a color, say 1, that is not used on $N(u)\setminus \{v_{1},\ldots,v_{14}\}$, and with which four vertices among $w_1,\ldots,w_{14}$, say $w_1,w_2,w_3$ and $w_4$, are not colored under $\varphi_2$, we can extend $\varphi_2$ to an equitable 3-tree-coloring of $G$ by coloring $u,v_1,v_2,v_3,v_4$ with 1, $v_5,v_6,v_7,v_8,v_9$ with 2, and $v_{10},v_{11},v_{12},v_{13},v_{14}$ with 3.
Let $H'=G-\{u,v_1,\ldots,v_{11}\}$. By the minimality of $G$, $H'$ admits an equitable 4-tree-coloring $\varphi_3$. Note that there is a color, say 1, that has been used on $N(u)\setminus \{v_1,\ldots,v_{11}\}$ at most once, and with which two vertices among $w_1,\ldots,w_{11}$, say $w_1$ and $w_2$, are not colored under $\varphi_3$. Therefore, we extend $\varphi_3$ to an equitable 4-tree-coloring of $G$ by coloring $u,v_1,v_2$ with 1, $v_3,v_4,v_5$ with 2, $v_6,v_7,v_8$ with 3, and $v_{9},v_{10},v_{11}$ with 4. Hence, $G$ has an equitable $t$-tree-coloring for every $t\geq 3$, a contradiction. \hfill $\square$

\vspace{2mm} We now prove the theorem by discharging. First, assign each vertex $v$ of $G$ an initial charge $c(v)=d(v)-4$ and each face $f$ of $G$ an initial charge $c(v)=d(f)-4$. By Euler's formula, $\sum_{x\in V(G)\cup F(G)}c(x)=-8$. It is easy to see that there is no 1-vertices in $G$. The discharging rules we are applying are defined as follows.

\vspace{2mm} R1. Each $2$-vertex receives $\frac{3}{4}$ from each of its neighbors, and $\frac{1}{2}$ from each of its incident $5^+$-faces.

R2. Each $3$-vertex receives $\frac{1}{6}$ from each of its 5-neighbors or 6-neighbors, $\frac{1}{4}$ from each of its $7^+$-neighbors, and $\frac{1}{4}$ from each of it incident $5^+$-faces.

\vspace{2mm} Let $c'$ be the final charge after discharging. If $f$ is a $5^+$-face that is incident with $n$ vertices of degree 2, then $f$ is incident with at most $d(f)-2n-1$ vertices of degree 3, since 2-vertices are not adjacent to any $3^-$-vertices by Proposition 1. Hence, $c'(f)\geq d(f)-4-\frac{1}{2}n-\frac{1}{4}\big(d(f)-2n-1\big)=\frac{3}{4}\big(d(f)-5\big)\geq 0$ by R1 and R2. If $v$ is a 2-vertex, then $v$ is incident with at least one $5^+$-face by the definition of $G$, so $c'(v)\geq -2+2\times\frac{3}{4}+\frac{1}{2}=0$ by R1. If $v$ is a 3-vertex, then $v$ is incident with at least two $5^+$-faces, because otherwise there would be two adjacent 4-cycles in $G$. If $v$ is adjacent to three $5^+$-vertices, then by R2, $c'(v)\geq -1+3\times\frac{1}{6}+2\times\frac{1}{4}=0$. If $v$ is adjacent to a $4^-$-vertex, then by
Proposition 2, $v$ is adjacent to two $7^+$-vertices, which implies that $c'(v)\geq -1+2\times\frac{1}{4}+2\times\frac{1}{4}=0$ by R2. If $v$ is a 5-vertex or a 6-vertex, then $c'(v)\geq d(v)-4-\frac{1}{6}d(v)>0$ by R2, since $v$ has no 2-neighbors. If $v$ is a 7-vertex, then by Proposition 5, $v$ has at most one 2-neighbor, which implies that $c'(v)\geq 3-\frac{3}{4}-6\times\frac{1}{4}>0$ by R1 and R2. If $v$ is a 8-vertex or a 9-vertex, then by Proposition 6, R1 and R2, $c'(v)\geq d(v)-4-4\times\frac{3}{4}-\frac{1}{4}\big(d(v)-4\big)=\frac{3}{4}\big(d(v)-8\big)\geq 0$. If $v$ is a $10$-vertex, then by Proposition 7, R1 and R2, $c'(v)\geq 6-7\times\frac{3}{4}-3\times\frac{1}{4}=0$. If $v$ is a 11-vertex, then by Proposition 8, R1 and R2, $c'(v)\geq 7-7\times\frac{3}{4}-4\times\frac{1}{4}>0$.
If $v$ is a 12-vertex or a 13-vertex, then by Proposition 9, R1 and R2, $c'(v)\geq d(v)-4-10\times\frac{3}{4}-\frac{1}{4}\big(d(v)-10\big)=\frac{3}{4}\big(d(v)-12\big)\geq 0$.
If $v$ is a 14-vertex or a 15-vertex, then by Proposition 10, R1 and R2, $c'(v)\geq d(v)-4-13\times\frac{3}{4}-\frac{1}{4}\big(d(v)-13\big)=\frac{3}{4}\big(d(v)-14\big)\geq 0$.
If $v$ is a $16^+$-vertex, then $c'(v)\geq d(v)-4-\frac{3}{4}d(v)=\frac{1}{4}\big(d(v)-16\big)\geq 0$ by R1 and R2. Therefore, $\sum_{x\in V(G)\cup F(G)}c'(x)\geq 0$, a contradiction completing the proof.
\end{proof}

\end{document}